\numberwithin{equation}{section}
\newtheorem*{claim}{Claim}
\newtheorem{theorem}{Theorem}[section]
\newtheorem{lemma}[theorem]{Lemma}
\newtheorem{remark}[theorem]{Remark}
\def\al{\aligned}
\def\eal{\endaligned}
\def\be{\begin{equation}}
\def\ee{\end{equation}}
\def\lab{\label}
\def\a{\alpha}
\def\e{\epsilon}
\def\M{{\bf M}}
\def\al{\aligned}
\def\pa{\partial}
\def\d{\nabla}
\numberwithin{equation}{section}
\begin{document}

\title[]{Li-Yau gradient bound for collapsing manifolds under integral curvature condition}
\author{Qi S. Zhang and Meng Zhu}
\address{Department of Mathematics, University of California, Riverside, CA 92521, USA}
\email{qizhang@math.ucr.edu}
\address{Department of Mathematics, East China Normal University, Shanghai 200241, China
-and- Department of Mathematics, University of California, Riverside, CA 92521, USA}
\email{mzhu@math.ucr.edu}
\date{}

\begin{abstract}
Let $(\M^n, g_{ij})$ be a complete Riemammnian manifold. For some constants $p,\ r>0$, define $\displaystyle k(p,r)=\sup_{x\in M}r^2\left(\oint_{B(x,r)}|Ric^-|^p dV\right)^{1/p}$, where $Ric^-$ denotes the negative part of the Ricci curvature tensor. We prove that for any $p>\frac{n}{2}$, when $k(p,1)$ is small enough, certain Li-Yau type gradient bound holds for the positive solutions of the heat equation on geodesic balls $B(O,r)$ in $\M$ with $0<r\leq 1$. Here the assumption that $k(p,1)$ being small allows the situation where the manifolds is collapsing. Recall that in \cite{ZZ}, certain Li-Yau gradient bounds was also obtained by the authors, assuming that $|Ric^-|\in L^p(\M)$ and the manifold is noncollaped. Therefore, to some extent, the results in this paper and in \cite{ZZ} complete the picture of Li-Yau gradient bound for the heat equation on manifolds with $|Ric^-|$ being $L^p$ integrable, modulo sharpness of constants.

\end{abstract}
\maketitle

\section{Introduction}

Let $(\M^n, g_{ij})$ be a complete Riemmanian manifold. In \cite{LY}, Li and Yau proved the celebrated Li-Yau gradient bound for positive solutions of the heat equation on $\M$ when the Ricci curvature of $\M$ is bounded from below. It states that if $Ric\geq -K$ for some constant $K\geq 0$, then for any positive solution $u$ of the heat equation $\pa _t u=\Delta u$, one has
\begin{equation}\label{Li-Yau>1}
\frac{|\nabla u|^2}{u^2} - \alpha\frac{u_t}{u} \leq \frac{n\alpha^2K}{2(\alpha-1)}+\frac{n\alpha^2}{2t},\quad \forall \alpha>1.
\end{equation}
Especially when $Ric\geq 0$, one has the optimal Li-Yau bound
\begin{equation}\label{Li-Yau=1}
\frac{|\nabla u|^2}{u^2} - \frac{u_t}{u} \leq \frac{n}{2t}.
\end{equation}

Li-Yau type gradient bounds for parabolic equations are essential tools for studying topological and geometrical properties of manifolds. For instance, the classical parabolic Harnack inequality, optimal Gaussian estimates of the heat kernel, estimates of eigenvalues of the Laplace operator, estimates of the Green's function, and even Laplacian comparison theorem can be deduced from \eqref{Li-Yau=1}.

While the coefficients in \eqref{Li-Yau=1} are sharp for the case where $Ric\geq 0$, for the case where the Ricci curvature bounded from below, many efforts have been made to improve \eqref{Li-Yau>1} in the past several decades. The readers  may refer to \cite{Ha}, \cite{CTZ}, \cite{Dav}, \cite{GM}, \cite{LX}, \cite{QZZ}, \cite{Wan}, \cite{WanJ} and the latest \cite{BBG} and the references therein for more information.

On the other hand, generalizations of Li-Yau gradient bounds have also been studied by many mathematicians. Hamilton \cite{Ha} discovered a matrix Li-Yau type bound for the heat equation. Certain matrix Li-Yau bound under weaker conditions was subsequently obtained by Cao-Ni \cite{CaNi} on K\"ahler manifolds. Moreover, Li-Yau type bounds were also proved for weighted manifolds with Bakry-\'Emery Ricci curvature being bounded from below, or more generally for metric measure spaces $(X, d, \mu)$ satisfying $RCD^*(K,N)$ condition (see e.g. \cite{BL} and \cite{ZhZx}).

However, to authors' knowledge, in all the known results on the standard Li-Yau bounds on Riemmanian manifolds, the lower boundedness of the Ricci curvature is a necessary assumption. In \cite{ZZ}, the authors initiated an effort to derive Li-Yau bounds for relaxed Ricci curvature condition. More precisely, certain Li-Yau gradient bound for positive solution of the heat equation on compact manifolds was proved by assuming that either $|Ric^-|\in L^p(\M)$ for some $p>n/2$ and the manifold is noncollapsed, or certain Kato type of norm of $|Ric^-|$ is finite and the heat kernel has a Gaussian upper bound. Here $Ric^-$ is the negative part of the Ricci curvature tensor. Both assumptions allow the lower bound of the Ricci curvature to tend to $-\infty$.

In this paper, we will extend the result in \cite{ZZ} for the case where $|Ric^-|\in L^p(\M)$ by removing the noncollapsing assumption with the assistance of the Sobolev inequality proved by Dai-Wei-Z.L. Zhang \cite{DWZ} most recently. Our main theorem is

\begin{theorem}\label{main thm}
Let $(\M^n, g_{ij})$ be a complete Riemannian manifold and $u$ a positive solution of the heat equation on $\M$, i.e.,
\be\label{HE}
(\Delta - \pa_t)u=0.
\ee
For any $p>\frac{n}{2}$, there exists a constant $\kappa=\kappa(n,p)$ such that the following holds. If $k(p,1)\leq \kappa$, then for any point $O\in \M$ and constant $0<\alpha<1$, we have
\be\label{Li-Yau}
\a \underline{J}\frac{|\d u|^2}{u^2}-\frac{\pa_t u}{u}\leq \frac{n}{\a(2 - \delta)\underline{J}}\frac{1}{t}+\frac{C}{\a(2 - \delta)\underline{J}}\left[\frac{1}{\a(2-\delta)\underline{J}(1-\a)}+1\right],
\ee
in $B(O,\frac{1}{2})\times(0,\infty)$, where
\[
\underline{J}=\underline{J}(t)=2^{-\frac{1}{a-1}}\exp\left\{-2C\kappa\left(1+[2C(a-1)\kappa]^{\frac{n}{2p-n}}\right)t\right\},
\] $\delta=\frac{2(1-\a)^2}{n+(1-\a)^2}$, $a= \frac{5[n+(1-\alpha)^2]}{2(1-\alpha)^2}$ and $C=C(n,p)$ is a constant depending on $n$ and $p$.
\end{theorem}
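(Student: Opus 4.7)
The plan is to follow the classical Li-Yau strategy based on Bochner's formula, but since the Ricci lower bound here is only integral, the maximum principle is replaced by a Moser iteration. This is the same scheme pioneered by the authors in \cite{ZZ}; the crucial new input is the Sobolev inequality of Dai--Wei--Z.L.~Zhang \cite{DWZ}, which holds under small $k(p,1)$ \emph{without} any non-collapsing assumption and therefore lets the iteration close in the present setting.

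First I would set $f=\log u$, so that $\partial_t f=\Delta f+|\nabla f|^2$, and introduce the Li--Yau quantity
\[
F(x,t)=t\Bigl[\alpha\,\underline{J}(t)\,|\nabla f|^2-\partial_t f\Bigr].
\]
Applying Bochner's formula to $|\nabla f|^2$, together with $(\Delta f)^2\leq n|\Hess f|^2$ and Cauchy--Schwarz on the cross term $\nabla f\cdot\nabla(\Delta f)$, one derives a pointwise differential inequality of the schematic form
\[
(\Delta-\partial_t)F+2\nabla f\cdot\nabla F\;\geq\;\frac{\alpha(2-\delta)\underline{J}}{n\,t}F^{2}-\frac{F}{t}-C\,t\,|\Ric^-|\,|\nabla f|^{2},
\]
modulo lower-order contributions in $\alpha,\delta,a$. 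The precise prescription of $\underline{J}(t)$, $\delta$, and $a$ in the statement is dictated by the algebra needed to match the $\delta$-loss in $|\Hess f|^2\geq(\Delta f)^2/n$: the contribution of $-\underline{J}'(t)|\nabla f|^2$ is arranged to absorb the part of $-2t\,\Ric(\nabla f,\nabla f)$ that averages rather than localizes.

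Second, I would run Moser iteration on $F_+:=\max(F,0)$. Fix a smooth cutoff $\phi$ with $\supp\phi\subset B(O,3/4)$ and $\phi\equiv 1$ on $B(O,1/2)$. For $q\geq 1$, multiply the differential inequality by $F_+^{q-1}\phi^{2q}$ and integrate over $B(O,1)\times[\tau_1,\tau_2]$; integration by parts in space turns the Laplacian into a Dirichlet energy of $F_+^{q/2}\phi^{q}$ plus standard cutoff errors, while the drift $2\nabla f\cdot\nabla F$ is handled by rewriting it in divergence form and using the algebraic identity $\Delta f=-F/t+\alpha\underline{J}|\nabla f|^2$. Now apply the \cite{DWZ} Sobolev inequality
\[
\Bigl(\int_{B(O,1)} v^{2\nu}\,dV\Bigr)^{1/\nu}\;\leq\;C_{S}\int_{B(O,1)}\bigl(|\nabla v|^{2}+v^{2}\bigr)\,dV,\qquad \nu=\frac{n}{n-2},
\]
valid under $k(p,1)\leq\kappa$ with $C_S=C_S(n,p)$ independent of the volume of the ball. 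Estimate the curvature term $\int|\Ric^-|\,|\nabla f|^{2}F_+^{q-1}\phi^{2q}$ by H\"older (using $|\Ric^-|\in L^{p}$ with $p>n/2$), interpolate $L^{p/(p-1)}$ between $L^{1}$ and $L^{\nu}$, and absorb the $L^{\nu}$ piece into the Sobolev term thanks to the smallness of $k(p,1)$. This yields a reverse-H\"older estimate $\|F_+\|_{L^{q\nu}}\leq(Bq)^{\sigma}\|F_+\|_{L^{q}}$ on progressively smaller parabolic cylinders; iterating along $q=\nu^{k}$ produces
\[
\sup_{B(O,1/2)\times[t/2,t]}F_{+}\;\leq\;C\int_{B(O,3/4)\times[t/4,t]}F_{+}\,dV\,ds+P(1/t),
\]
with an explicit polynomial $P$, and combining with a crude $L^{1}$ bound on $F_+$ obtained by the same inequality at $q=1$ recovers the constants appearing in \eqref{Li-Yau}. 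The factor $\underline{J}(t)$ emerges by tracking the $e^{C\kappa t}$-type growth of parabolic integrals of $|\Ric^-|$.

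The principal obstacle is uniformity in the Moser step: because $|\Ric^-|$ lives just above the critical scale $L^{n/2}$, the curvature absorption into the Sobolev term must be uniform in the iteration index $q$, so the smallness of $k(p,1)$ must enter at every level and cannot be spent once. Keeping the Sobolev constant $C_{S}$ from \cite{DWZ} stable under the rescaled cutoffs, and verifying that the interpolation exponents remain admissible for every $p>n/2$, is the delicate part. A secondary bookkeeping issue is the algebra that makes $F^{2}/(nt)$ dominate after the $\delta$-loss and produces exactly the prefactor $\bigl[\alpha(2-\delta)\underline{J}\bigr]^{-1}$ on the right-hand side of \eqref{Li-Yau}; this is what forces the specific choices of $\underline{J}$, $\delta$, and $a$ in the statement.
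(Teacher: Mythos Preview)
Your proposal diverges from the paper's argument at the first substantive step, and the divergence creates a real gap.

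In the paper, $J$ is \emph{not} the scalar function $\underline{J}(t)$ from the statement; it is an unknown $J=J(x,t)$ chosen to solve the auxiliary boundary value problem
\[
\Delta J-2\,|\Ric^{-}|\,J-5\delta^{-1}\frac{|\nabla J|^{2}}{J}-\partial_{t}J=0\quad\text{on }B(O,1)\times(0,\infty),\qquad J=1\text{ on the parabolic boundary}.
\]
With this choice the differential inequality for $tQ$, $Q=\alpha J|\nabla f|^{2}-\partial_{t}f$, has \emph{no curvature term left}: the bracket $[\Delta J-2VJ-5\delta^{-1}|\nabla J|^{2}/J-\partial_{t}J]\,t|\nabla f|^{2}$ vanishes identically. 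The substitution $w=J^{-(a-1)}$ linearizes the auxiliary equation to $\Delta w-\partial_{t}w+2(a-1)Vw=0$, and the bound $J\geq\underline{J}(t)$ is then obtained by Duhamel's formula, the Gaussian upper bound for the Dirichlet heat kernel (this is where the \cite{DWZ} Sobolev inequality enters, via the heat kernel), H\"older against $\|V\|_{L^{p}}$, and a Gronwall argument. Only after this does the paper apply the ordinary pointwise maximum principle to $t\phi^{2}Q$, using the cutoff $\phi$ of \cite{DWZ} with $|\nabla\phi|^{2}+|\Delta\phi|\leq C$; the maximum principle succeeds precisely because the curvature has already been removed. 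No Moser iteration appears anywhere, and the same is true of \cite{ZZ}: the ``scheme pioneered by the authors in \cite{ZZ}'' is auxiliary PDE plus maximum principle, not iteration.

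The gap in your approach is the handling of the residual term $C\,t\,|\Ric^{-}|\,|\nabla f|^{2}$. Since you take $\underline{J}$ to depend only on $t$, the term $-\underline{J}'(t)|\nabla f|^{2}$ is a scalar multiple of $|\nabla f|^{2}$ and cannot cancel the $x$-dependent $|\Ric^{-}|(x)|\nabla f|^{2}$; your phrase ``absorb the part \dots\ that averages rather than localizes'' has no precise content here. In the Moser step you then face integrals of the type $\int |\Ric^{-}|\,|\nabla f|^{2}\,F_{+}^{q-1}\phi^{2q}$, and there is no way to close them within the iteration: $|\nabla f|^{2}$ is not dominated by $F_{+}$ (indeed $F_{+}=0$ allows arbitrarily large $|\nabla f|$), nor by $|\nabla(F_{+}^{q/2}\phi^{q})|^{2}$, so H\"older against $\|\Ric^{-}\|_{L^{p}}$ leaves a factor you cannot interpolate back to the Sobolev term. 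The drift term has the same defect, since after integration by parts it produces $\int\Delta f\,F_{+}^{q}\phi^{2q}$ with $\Delta f=(\alpha\underline{J}-1)|\nabla f|^{2}-F/t$, reintroducing an uncontrolled $|\nabla f|^{2}F_{+}^{q}$. This is exactly the obstruction that the $x$-dependent auxiliary function $J(x,t)$ is designed to remove before any maximum or integral argument is run. Finally, the Sobolev inequality from \cite{DWZ} is volume-normalized; the constant $C_{S}$ in the unnormalized $L^{2}$-Sobolev inequality is not independent of $|B(O,1)|$, contrary to what you write.
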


Comparing to Theorem 1.1 in \cite{ZZ}, the assumption $k(p,1)\leq \kappa$ in the above theorem includes the possibility that the manifold is collapsing, while the Li-Yau bound above is a local estimate instead of a global one, since the assumption is only made locally.  The theorem clearly implies a local Harnack inequality which also implies a Gaussian lower bound for the heat kernel.

The method of the proof is similar to the proof of Theorem 1.1 in \cite{ZZ}. Namely, we compute the evolution of the quantity $\a J\frac{|\d u|^2}{u^2}-\frac{\pa_t u}{u}$ with $J=J(x,t)$ being a smooth function. To be able to proceed with the maximum principle argument, we need to solve certain nonlinear parabolic equation involving $J$ and derive upper and lower bounds of $J$, and this is where the Gaussian upper bound of the heat kernel and the volume doubling property come into play. The difference is that here we need to consider a boundary value problem for $J$ instead of the Cauchy problem in the proof of Theorem 1.1 in \cite{ZZ}.

Finally, let us note that \eqref{Li-Yau} is a scaling invariant inequality (see Remark \ref{rmk} below).
\smallskip

\section{Proof of the main theorem}
Let $(\M^n, g_{ij})$ be a complete Riemannian manifold. In the following, we use $B(x,r)$ and $|B(x,r)|$ to denote the geodesic ball with radius $r$ in $\M$ centered at $x$ and its volume, respectively. Also, the notation $\oint_{B(x,r)}$ represents the average integral $\frac{1}{|B(x,r)|}\int_{B(x,r)}$ on $B(x,r)$.

For $p, r>0$, following \cite{DWZ}, define
\[
k(x,p,r)=r^2\left(\oint_{B(x,r)}|Ric^-|^p\right)^{1/p},\qquad\  k(p,r)=\sup_{x\in \M}k(x,p,r).
\]

The main tools used in the proof of Theorem \ref{main thm} are volume doubling property and Gaussian upper bound of the heat kernel. Firstly, the volume doubling property was proved in \cite{PeWe}, i.e.,

\begin{lemma}[Petersen-Wei \cite{PeWe} Corollary 1.2]\label{lem volume doubling}
For any $p>n/2$ there is a $\kappa=\kappa(n,p)$ such that if $k(p,1)\leq \kappa$, then for all $x\in \M$ and $0<r_1\leq r_2\leq 1$, we have
\be\label{eq volume doubling}
\frac{|B(x,r_2)|}{r_2^n}\leq 2\frac{|B(x,r_1)|}{r_1^n}.
\ee
\end{lemma}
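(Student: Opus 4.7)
Since the statement is Corollary 1.2 of \cite{PeWe}, the plan is to reproduce Petersen--Wei's integral relative volume comparison and then extract the doubling constant $2$ by taking $\kappa$ small. The key object is the excess of the radial mean curvature: for $x\in\M$ fixed, let $m(r)$ be the mean curvature of $\pa B(x,r)$ inside the cut locus, and let $m_H(r)=(n-1)\coth r$ be the model mean curvature in the simply connected space form of constant curvature $-1$. Subtracting the two Riccati identities along a unit-speed radial geodesic and dropping a nonnegative term, the excess $\psi=(m-m_H)_+$ satisfies
\be
\psi' + \tfrac{1}{n-1}\psi^{2} + \tfrac{2}{n-1}\,m_H\,\psi \leq |\Ric^-|
\ee
in the barrier sense.

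The next step is to turn this pointwise inequality into an integral bound. Multiplying by $\psi^{2p-1}$, weighting by the model area form $A_H$, and integrating by parts in $r$ gives, for $p>n/2$ and $R\leq 1$,
\be
\int_0^R \psi^{2p}\,A_H\,dr \leq C(n,p)\int_0^R |\Ric^-|^{p}\,A_H\,dr,
\ee
where the threshold $p>n/2$ is exactly what allows the nonlinear $\psi^{2}\cdot\psi^{2p-1}$ term to be absorbed. Integrating over the unit tangent sphere at $x$ and applying the coarea formula then produces the Petersen--Wei monotonicity: the function $r\mapsto (|B(x,r)|/V_H(r))^{1/(2p)}$ fails to be nonincreasing on $(0,R]$ only by an error controlled by $k(x,p,R)^{1/2}\leq k(p,1)^{1/2}$.

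Combining the resulting comparisons at radii $r_1\leq r_2\leq 1$ yields
\be
\frac{|B(x,r_2)|}{|B(x,r_1)|} \leq \frac{V_H(r_2)}{V_H(r_1)}\bigl(1+\eta(\kappa)\bigr),
\ee
with $\eta(\kappa)\to 0$ as $\kappa\to 0$. Since $V_H(r)/r^n$ is continuous and converges to the Euclidean constant $\omega_n$ as $r\to 0$, one has $V_H(r_2)/V_H(r_1)\leq (1+\eta_0)(r_2/r_1)^n$ uniformly for $0<r_1\leq r_2\leq 1$ with $\eta_0=\eta_0(n)$. Choosing $\kappa$ small enough that $(1+\eta_0)(1+\eta(\kappa))\leq 2$ then gives \eqref{eq volume doubling}. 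I expect the main obstacle to be the technical care needed at the cut locus, where $m$ can jump downward (a favorable sign that must still be justified by a distributional or barrier argument), together with tracking the precise dependence of $C(n,p)$ on $p-n/2$ so that $\eta(\kappa)\to 0$ as $\kappa\to 0$.
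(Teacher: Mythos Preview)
The paper does not prove this lemma; it is simply quoted from \cite{PeWe}. Your plan to reproduce Petersen--Wei's integral relative volume comparison is therefore the right one, and the outline (Riccati inequality for the mean-curvature excess, $L^{2p}$ bound on $\psi$ via the threshold $p>n/2$, then relative volume comparison) is essentially correct.

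There is, however, a genuine gap in your last step, caused by the choice of model. You compare with the space form of curvature $-1$, so your reference volume is $V_H(r)=\omega_{n-1}\int_0^r\sinh^{n-1}(t)\,dt$. The quantity $1+\eta_0=\sup_{0<r_1\le r_2\le 1}\dfrac{V_H(r_2)/r_2^n}{V_H(r_1)/r_1^n}$ equals $n\int_0^1\sinh^{n-1}(t)\,dt$, which behaves like $(\sinh 1)^{n-1}\approx(1.175)^{n-1}$ and exceeds $2$ already for moderate $n$. Hence there is no choice of $\kappa$ making $(1+\eta_0)(1+\eta(\kappa))\le 2$, and the argument does not close.

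The fix is to run the whole comparison with the flat model ($\lambda=0$): take $m_0(r)=(n-1)/r$ and $V_0(r)=\omega_n r^n$. This is also the natural choice here, since $k(p,1)$ is defined through $|\Ric^-|$, i.e.\ the deviation from $\Ric\ge 0$, which is exactly Petersen--Wei's error term for the $\lambda=0$ comparison. With the flat model one has $V_0(r_2)/V_0(r_1)=(r_2/r_1)^n$ on the nose, so $\eta_0=0$ and choosing $\kappa$ small enough that $1+\eta(\kappa)\le 2$ gives \eqref{eq volume doubling} directly. Your remark about the cut locus is accurate: $m$ jumps downward there, which only helps the inequality and is handled in \cite{PeWe} by the standard barrier argument.
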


To get a Gaussian upper bound of the heat kernel, one also needs some estimate of the Sobolev constant besides the volume doubling property, which was obtained as follows in \cite{DWZ}.

\begin{lemma}[Dai-Wei-Zhang \cite{DWZ} Corollary 1.5]\label{lem Sobolev}
For any $p>n/2$ there is a $\kappa=\kappa(n,p)$ such that if $k(p,1)\leq \kappa$, then for all $x\in \M$, $0<r\leq 1$ and $f\in C_0^{\infty}(M)$ we have
\be\label{eq Sobolev}
\left(\oint_{B(x,r)}|f|^{\frac{n}{n-1}}dV\right)^{\frac{n-1}{n}}\leq C(n)r\oint_{B(x,r)}|\d f|dV.
\ee

\end{lemma}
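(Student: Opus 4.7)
The plan is the classical two-step derivation of a local isoperimetric-type Sobolev inequality from an integral curvature bound: first establish a local $(1,1)$-Poincaré inequality via a segment inequality, then upgrade it to the $L^{n/(n-1)}$ Sobolev inequality using the already-known volume doubling (Lemma \ref{lem volume doubling}). Smallness of $k(p,1)$ plays the role of a substitute for the pointwise lower Ricci bound that underlies the standard arguments.

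First I would prove a segment inequality in the form: for any non-negative measurable $h$, any $x\in\M$ and $0<r\le 1$,
\[
\int_{B(x,r)}\!\!\int_{B(x,r)}\int_0^{d(y,z)} h(\gamma_{y,z}(s))\,ds\,dV(y)\,dV(z)\le C(n,p)\,r\,|B(x,r)|\int_{B(x,2r)} h\,dV.
\]
The argument parametrizes the double integral along minimizing geodesics and invokes a modified Jacobian comparison of Petersen-Wei type: in the absence of a pointwise Ricci lower bound, the Jacobian of $\exp_y$ is controlled by its Euclidean counterpart up to a multiplicative factor depending on an integral of $|Ric^-|^p$ along radial directions. For $p>n/2$, a radial Hölder inequality followed by a Gronwall-type estimate converts this into a quantity of order $k(p,1)^{1/p}$; choosing $\kappa=\kappa(n,p)$ small enough keeps the correction bounded by a fixed constant, say $2$.

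Next, applying the segment inequality with $h=|\nabla f|$ and using $f(z)-f(y)=\int_0^{d(y,z)}\langle\nabla f,\dot\gamma_{y,z}\rangle\,ds$, one averages in $y$ and $z$ over $B(x,r)$ to obtain the weak Neumann $L^1$-Poincaré inequality
\[
\oint_{B(x,r)}|f-f_{B(x,r)}|\,dV\le C(n,p)\,r\oint_{B(x,2r)}|\nabla f|\,dV,
\]
valid on balls of radius $\le 1/2$. With both this weak Poincaré inequality and volume doubling in hand, the final step is the Maz'ya/Federer truncation argument: slice $|f|$ at dyadic levels $\{2^k\le |f|<2^{k+1}\}$, apply the Poincaré inequality on each slice, absorb the enlarged radius $2r$ back into $r$ using that the doubling constant in Lemma \ref{lem volume doubling} is uniform up to radius one, and sum the resulting geometric series. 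This is the standard Saloff-Coste/Grigor'yan upgrade from $(1,1)$-Poincaré plus doubling to an $L^{n/(n-1)}$ Sobolev-type inequality on a homogeneous space of dimension $n$; for $f\in C_0^\infty(\M)$ the mean $f_{B(x,r)}$ can be removed by a standard comparison, and careful bookkeeping of constants yields the stated bound $C(n)\,r$ on the right.

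The main obstacle is the first step. Without a pointwise lower Ricci bound, the Bishop-type Jacobian comparison fails and has to be replaced by the integral version, which introduces an error factor governed by $k(p,r)^{1/p}$. Showing that this factor can be made close to one by choosing $\kappa$ small is precisely where the hypothesis $p>n/2$ is used: it is this range of $p$ that makes the radial Hölder estimate integrable against the unperturbed Jacobian weight $s^{n-1}$. Once Step 1 is carried out, Steps 2 and 3 are essentially formal applications of the Saloff-Coste/Grigor'yan machinery, and what remains is primarily the quantitative tracking of constants to produce the clean dimensional constant $C(n)$ on the right-hand side of the stated inequality.
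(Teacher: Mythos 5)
The paper offers no proof of Lemma \ref{lem Sobolev}: it is imported verbatim from Dai--Wei--Zhang \cite{DWZ}, Corollary 1.5, so the relevant comparison is with their argument, not with anything in this paper. Their route is not the one you propose. They bound the local \emph{isoperimetric} constant directly, following Gromov's method of sweeping out the ball by minimal geodesics through a dividing hypersurface, and control the volume distortion along those geodesics by the Petersen--Wei integral mean-curvature/volume-element comparison; the $L^1$ Sobolev inequality \eqref{eq Sobolev} then follows from the isoperimetric estimate by the co-area formula. Your plan (segment inequality, then a weak $(1,1)$-Poincar\'e inequality, then the Maz'ya--Federer truncation argument combined with the doubling of Lemma \ref{lem volume doubling}) is a genuinely different pathway, and in principle doubling plus $(1,1)$-Poincar\'e does yield an $L^{n/(n-1)}$ Sobolev-type inequality; but the decisive content is in your Step 1, and as described it has a real gap.

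Under a purely integral bound on $|Ric^-|$ there is no pointwise comparison for the Jacobian of $\exp_y$ along an individual geodesic: the error $\psi=(\Delta r-\tfrac{n-1}{r})_+$ can be arbitrarily large in some directions, and the Petersen--Wei machinery controls only direction-averaged quantities such as $\int\psi^{2p}\mathcal{A}\,d\theta$, not the ratio $\mathcal{A}(s,\theta)/\mathcal{A}(u,\theta)$ along each ray. So the sentence ``a radial H\"older inequality followed by a Gronwall-type estimate converts this into a quantity of order $k(p,1)^{1/p}$'' asserts exactly what is unavailable; moreover those averaged errors are naturally normalized by volumes, and in the collapsed setting (no lower bound on $|B(x,1)|$) converting geodesic integrals of $|Ric^-|^p$ into the scale-invariant average $k(p,r)$ is precisely the difficulty that \cite{DWZ} was written to overcome, so the hard part of the lemma is hidden in that one step. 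Two further points about your closing step: the inequality is of Dirichlet type (no mean subtracted, same ball on both sides), so it can only hold for $f$ compactly supported in $B(x,r)$ --- as literally stated with $f\in C_0^{\infty}(\M)$ it fails for $f\equiv 1$ near the ball, and indeed \cite{DWZ} state it for $f\in C_0^{\infty}(B(x,r))$ --- and removing the mean $f_{B(x,r)}$ requires, besides the support condition, a lower bound on $|B(x,2r)\setminus B(x,r)|/|B(x,2r)|$ of reverse-doubling type, which does not follow from Lemma \ref{lem volume doubling} alone and needs a separate argument; finally the truncation route produces a constant depending on $p$ through the Poincar\'e constant rather than the clean $C(n)$, though that is a minor bookkeeping issue.
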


Denote by $G(x,t;y,0)$ the heat kernel of \eqref{HE}. With the Sobolev inequality \eqref{eq Sobolev} and volume doubling property \eqref{eq volume doubling}, it is well known that one can derive the following Gaussian upper bound for $G$ (see e.g. \cite{Sa}).

\begin{lemma}
For any $p>n/2$ there is a $\kappa=\kappa(n,p)$ such that if $k(p,1)\leq \kappa$, then for some constants $C_i=C_i(n,p)$, $i=1,2$, we have
\be\label{Gaussian upper bound}
G(x,t;y,0)\leq \frac{C_1}{|B(x,\sqrt{t})|^{\frac{1}{2}}|B(y,\sqrt{t})|^{\frac{1}{2}}}e^{-\frac{d^2(x,y)}{C_2t}}.
\ee
\end{lemma}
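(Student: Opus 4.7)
The plan is to invoke the well-established pipeline that derives Gaussian heat kernel upper bounds from a scale-invariant Sobolev inequality together with volume doubling, developed by Grigor'yan, Saloff-Coste, Sturm and others; a self-contained account is given in \cite{Sa}. The argument naturally splits into three stages: upgrade the $L^1$ Sobolev inequality \eqref{eq Sobolev} to an $L^2$ Sobolev (or Nash/Faber--Krahn) inequality on balls, run Moser iteration to extract an on-diagonal heat kernel bound, and then perform a Davies-type perturbation to produce the Gaussian off-diagonal factor.

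For the first stage, I would apply \eqref{eq Sobolev} to $|f|^{q}$ with an appropriately chosen exponent $q>1$ and use H\"older's inequality to obtain, for $f\in C_0^\infty(B(x,r))$ and $0<r\leq 1$,
\begin{equation*}
\left(\oint_{B(x,r)} |f|^{\frac{2n}{n-2}} dV\right)^{\frac{n-2}{n}} \leq C(n)\, r^2 \oint_{B(x,r)} |\nabla f|^2 \, dV.
\end{equation*}
Together with the doubling bound \eqref{eq volume doubling}, this is equivalent to a scale-invariant Faber--Krahn (equivalently Nash) inequality on balls of radius $\leq 1$. For the second stage, Moser iteration applied to $G(x,t;\cdot,0)$, or equivalently Nash's $L^1\to L^\infty$ argument, yields the on-diagonal estimate $G(x,t;x,0)\leq C(n,p)/|B(x,\sqrt t)|$ for $0<t\leq 1$; promoting this to the off-diagonal form
\begin{equation*}
G(x,t;y,0) \leq \frac{C(n,p)}{|B(x,\sqrt t)|^{1/2}\,|B(y,\sqrt t)|^{1/2}}
\end{equation*}
follows from the symmetry inequality $G(x,t;y,0)^2\leq G(x,2t;x,0)\,G(y,2t;y,0)$ combined with doubling. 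The range $t>1$ is then reached by iterating the semigroup identity $G(x,2t;y,0)=\int_\M G(x,t;z,0)G(z,t;y,0)\,dV(z)$ together with stochastic subconservativity $\int_\M G(x,t;\cdot,0)\,dV\leq 1$ and doubling.

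For the third stage, I would apply the Davies perturbation method: given a bounded Lipschitz function $\psi$ with $\|\nabla\psi\|_\infty\leq\lambda$, the twisted semigroup $e^{-\psi}P_t e^{\psi}$ can be shown by a direct energy calculation to gain at most a factor $e^{\lambda^2 t}$ in its operator norm; optimizing with $\psi(z)=\lambda\bigl(d(z,y)\wedge d(x,y)\bigr)$ and $\lambda=d(x,y)/(Ct)$ converts the on-diagonal bound into \eqref{Gaussian upper bound}. The main technical subtlety---more bookkeeping than a genuine obstacle---is the local character of the hypotheses, which hold only for $r\leq 1$: one works first in the short-time regime $t\leq 1$ where scaling is transparent and then extends to all $t>0$ via iteration of the semigroup property, verifying throughout that every constant depends only on $n$ and $p$.
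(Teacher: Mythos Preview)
Your proposal is correct and follows essentially the same approach as the paper: the paper does not give a detailed proof but simply observes that the Gaussian upper bound is a well-known consequence of the volume doubling property (Lemma~\ref{lem volume doubling}) and the local Sobolev inequality (Lemma~\ref{lem Sobolev}), citing \cite{Sa}. Your sketch fills in exactly the standard steps of that machinery---upgrading to an $L^2$ Sobolev inequality, Moser/Nash iteration for the on-diagonal bound, and Davies' perturbation for the Gaussian factor---and correctly flags the only genuine bookkeeping issue, namely the restriction $r\leq 1$ in the hypotheses.
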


For the maximum principle to work locally, we also need to the following cut-off function.

\begin{lemma}[Dai-Wei-Zhang \cite{DWZ} Lemma 5.3]\label{cutoff}
Let $(\M^n,g_{ij})$ be a complete Riemannian manifold. Then for any $p>\frac{n}{2}$, there exist constants $\kappa=\kappa(n,p)$ and $C=C(n,p)$ such that if $k(p,1)\leq \kappa$, then for any geodesic ball $B(x,r)$ and $0<r\leq 1$ there exists $\phi\in C_0^{\infty}(B(x,r))$ satisfying $0\leq \phi \leq 1$, $\phi\equiv 1$ in $B(x,\frac{r}{2})$, and $|\d \phi|^2+|\Delta \phi|\leq \frac{C}{r^2}$.
\end{lemma}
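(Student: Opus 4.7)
The plan is to construct $\phi$ as a heat-semigroup regularization of a rough Lipschitz cutoff, exploiting the Gaussian heat kernel upper bound \eqref{Gaussian upper bound} and the volume doubling \eqref{eq volume doubling} already established above. Under a \emph{pointwise} Ricci lower bound one would compose the distance function with a smooth profile and invoke Laplacian comparison to control $\De\phi$; in the integral setting this is unavailable, since only an $L^p$-type bound on $(\De d)_+$ is at our disposal.

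First I would fix a Lipschitz cutoff $\chi$ on $\M$ with $\chi\equiv 1$ on $B(x,5r/8)$, $\supp\chi\subset B(x,7r/8)$, and $|\d\chi|\leq C/r$, and set $\phi_0=P_\tau\chi$ with $\tau=\eps r^2$ for $\eps=\eps(n,p)$ to be chosen small. Then $\phi_0\in C^\infty(\M)$ and $0\leq\phi_0\leq 1$ by the maximum principle. Writing $\De\phi_0=\pa_\tau P_\tau\chi$ reduces a pointwise bound on the Laplacian to estimating $\pa_\tau G$; the estimate
\[
|\pa_\tau G(y,\tau;z,0)|\ \leq\ \frac{C}{\tau\,|B(y,\sqrt{\tau})|^{1/2}|B(z,\sqrt{\tau})|^{1/2}}\,e^{-d^2(y,z)/C\tau}
\]
follows from \eqref{Gaussian upper bound} by a standard semigroup trick (split the time interval in half, differentiate in $\tau$, and use the semigroup identity), and together with $\supp\chi\subset B(x,r)$ and \eqref{eq volume doubling} yields $|\De\phi_0|\leq C/(\eps r^2)$. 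For the gradient, the pointwise estimate $\|\d P_\tau f\|_\infty\leq C\tau^{-1/2}\|f\|_\infty$, obtained by Moser iteration on $|\d P_\tau f|^2$ using \eqref{eq Sobolev} together with \eqref{Gaussian upper bound}, gives $|\d\phi_0|^2\leq C/(\eps r^2)$.

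To upgrade $\phi_0$ into an exact cutoff, I would verify $\phi_0\geq 7/8$ on $B(x,r/2)$ once $\eps$ is small enough, using the on-diagonal heat kernel lower bound (which is derived from \eqref{Gaussian upper bound}, \eqref{eq volume doubling} and \eqref{eq Sobolev} in the manner of Grigor'yan) together with $\chi\equiv 1$ on $B(x,5r/8)$. Then $\phi:=\psi\circ\phi_0$, for a fixed smooth $\psi:[0,1]\to[0,1]$ with $\psi\equiv 1$ on $[7/8,1]$ and $\psi\equiv 0$ on $[0,1/8]$, multiplied by a further outer cutoff supported between $B(x,7r/8)$ and $B(x,r)$, lies in $C_0^\infty(B(x,r))$, equals $1$ on $B(x,r/2)$, and satisfies $|\d\phi|^2+|\De\phi|\leq C/r^2$ by a routine chain-rule calculation. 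The main obstacle is the two pointwise semigroup estimates above: under a pointwise Ricci lower bound they are classical, but in the integral regime they must be redeveloped from scratch using only \eqref{eq volume doubling} and \eqref{eq Sobolev} as inputs, which is essentially the content of the construction in \cite{DWZ}.
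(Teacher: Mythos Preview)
The paper does not prove this lemma at all; it is quoted from \cite{DWZ} as a black-box input, with no argument supplied here. There is therefore nothing in the present paper to compare your proposal against.

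As for the proposal itself: the heat-semigroup regularization is a plausible route, and you correctly flag that the real work lies in the two pointwise semigroup estimates (time-derivative and gradient of the heat kernel), which under only integral Ricci control are not classical and amount to redeveloping much of \cite{DWZ}. There is, however, one concrete gap in your outline. The ``further outer cutoff supported between $B(x,7r/8)$ and $B(x,r)$'' that you multiply by at the end is either circular (a smooth cutoff with controlled Laplacian on that annulus is precisely the object you are trying to construct) or, if it is merely Lipschitz, destroys both the $C^\infty$ regularity and the pointwise bound on $\De\phi$. The right fix is to drop it entirely: use the Gaussian upper bound \eqref{Gaussian upper bound} together with $\supp\chi\subset B(x,7r/8)$ and volume doubling to show $\phi_0\le 1/8$ on $\M\setminus B(x,15r/16)$ once $\eps$ is small, so that $\psi\circ\phi_0$ already vanishes outside $B(x,r)$. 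Note also that your gradient step ``Moser iteration on $|\d P_\tau f|^2$'' rests on the Bochner inequality $(\De-\pa_t)|\d u|^2\ge -2|Ric^-|\,|\d u|^2$, and running Moser iteration with the potential $|Ric^-|\in L^p$ is itself one of the nontrivial ingredients of \cite{DWZ}; so, as you acknowledge in your last sentence, the sketch ultimately defers the substance to \cite{DWZ} rather than giving an independent argument.
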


Now we are ready to prove the main theorem.

\begin{proof}[Proof of Theorem \ref{main thm}]
Let $J=J(x, t)$ be a smooth positive  function and
\[
Q(x,t)=\a J\frac{|\d u|^2}{u^2}-\frac{\pa_t u}{u}.
\]
According to the computations in the proof of Theorem 1.1 in \cite{ZZ}, we have
\be
\lab{ineqtQ}
\al
&(\Delta  - \pa_t) ( t Q )
+ 2 \frac{\d u}{u} \d (t Q)  \\
\ge&  \a t \frac{2-\delta}{n} J \left( | \d  f |^2 - \pa_t f \right)^2  +
\a \left[\Delta J - 2 V  J
-
5 \delta^{-1} \frac{| \d J|^2}{J}
 -\pa_t J \right]  t |\d f|^2\\
&  - \delta \a  t J |\d f|^4 -Q,
\eal
\ee where $V=| Ric^-|$ and $f=\ln u$.

For a fixed point $O\in \M$ and any given parameter $\delta>0$ such that $5\delta^{-1}>1$, we make the following
\begin{claim}
\lab{claim}
there exists a $\kappa=\kappa(n,p)$ such that when $k(p,1)\leq \kappa$, for any $0<r\leq 1$, the problem
\be
\lab{eqforJ}
\begin{cases}
\Delta J - 2 V  J
-
5 \delta^{-1}\frac{| \d J|^2}{J}
 -\pa_t J =0, \quad \text{on} \quad {B(O,r)} \times (0, \infty);\\
J(\cdot, 0) = 1,\ on\ B(O,r)\\
J(\cdot, t) = 1,\ on\ \pa B(O,r)
\end{cases}
\ee
has a unique solution for $t\in[0,\infty)$, which satisfies
\be
\underline{J}_r(t)\leq J(x,t)\leq 1,
\ee
where
\be\label{J_r}
\underline{J}_r(t)=2^{-\frac{1}{a-1}}\exp\left\{-2C\kappa r^{-2}\left(1+[2C(a-1)\kappa]^{\frac{n}{2p-n}}\right)t\right\}
\ee
for some constant $C=C(n,p)$ and $a=5\delta^{-1}$.
\end{claim}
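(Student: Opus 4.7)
The plan is to linearize \eqref{eqforJ} by the substitution $\phi = J^{1-a}$, where $a = 5\delta^{-1} > 1$; the exponent is chosen precisely to cancel the nonlinear $|\nabla J|^2/J$ term, reducing \eqref{eqforJ} to the linear parabolic equation
\be
\partial_t \phi - \Delta \phi = 2(a-1)V\phi \quad\text{on } B(O,r)\times(0,\infty),
\ee
with $\phi\equiv 1$ on both $B(O,r)\times\{0\}$ and $\partial B(O,r)\times[0,\infty)$. Under this substitution, the target bounds $\underline{J}_r(t) \leq J \leq 1$ become
\[
1 \leq \phi(x,t) \leq 2\exp\!\Bigl\{2(a-1)C\kappa r^{-2}\bigl(1+[2C(a-1)\kappa]^{n/(2p-n)}\bigr)t\Bigr\}.
\]
Existence and uniqueness for this linear Cauchy--Dirichlet problem are standard, after approximating $V$ by bounded mollifications and passing to the limit using the $L^\infty$ bounds below.

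The lower bound $\phi\geq 1$ (equivalently $J\leq 1$) is immediate from the maximum principle: since $V\geq 0$, the constant function $1$ is a subsolution of the $\phi$-equation, and it matches the parabolic boundary data of $\phi$.

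For the upper bound on $\phi$, set $\psi := \phi - 1 \geq 0$, which vanishes on the parabolic boundary. Duhamel's formula with the Dirichlet heat kernel $p^D$ on $B(O,r)$ gives
\[
\psi(x,t) = 2(a-1)\int_0^t\!\int_{B(O,r)} p^D(x,t-s;y,0)\,V(y)\bigl[1+\psi(y,s)\bigr]\,dy\,ds.
\]
Dominating $p^D$ by the global heat kernel, applying the Gaussian bound \eqref{Gaussian upper bound}, the doubling property \eqref{eq volume doubling}, and H\"older's inequality with exponent $p>n/2$, one obtains an estimate of the form
\[
\int_{B(O,r)} p^D(x,t-s;y,0)\,V(y)\,dy \;\leq\; C(n,p)\,k(p,r)\,r^{-2}\Bigl(\tfrac{r^2}{t-s}\Bigr)^{n/(2p)},
\]
whose time-singularity is integrable precisely because $p>n/2$. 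Writing $M(t):=\sup_{B(O,r)}\psi(\cdot,t)$ and using $k(p,r)\leq\kappa$, the Duhamel identity reduces to a scale-invariant fractional Volterra inequality in $\tau = t/r^2$. Iterating produces a Mittag-Leffler--type series whose $k$-th term scales like $(C\kappa)^k\tau^{k(1-n/(2p))}/\Gamma(k(1-n/(2p))+1)$; summing the series and invoking the standard exponential bound on Mittag-Leffler functions yields the claimed upper bound, with the compound exponent $n/(2p-n) = (n/(2p))/(1-n/(2p))$ appearing as the reciprocal of the Mittag-Leffler index.

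The main obstacle is tracking the constants through the fractional Gronwall iteration precisely enough to reproduce the stated form of $\underline{J}_r$---in particular, the factor $2(a-1)$ in the exponent and the compound term $[2C(a-1)\kappa]^{n/(2p-n)}$. The smallness of $\kappa$ plays two roles here: it is needed to invoke Lemmas \ref{lem volume doubling}, \ref{lem Sobolev}, and \ref{cutoff} (hence the Gaussian upper bound), and it keeps the iterated integral series geometrically convergent so the resulting exponential constant is finite.
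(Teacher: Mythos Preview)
Your approach coincides with the paper's: the substitution $w=J^{1-a}$ (your $\phi$), the maximum principle giving $w\ge 1$, and Duhamel's formula with the Dirichlet heat kernel dominated by the global one via \eqref{Gaussian upper bound} are exactly what the paper does. The only divergence is in the final step, where you propose a Mittag--Leffler/fractional-Gronwall iteration. The paper instead uses an elementary absorption trick: setting $h(t)=\sup_{B(O,r)\times[0,t]} w$ (which is nondecreasing since $w\ge 1$), it splits the singular part of the time integral as $\int_{t-r^2}^{t-\eps r^2}+\int_{t-\eps r^2}^{t}$; on the second piece one may pull out $h(s)\le h(t)$, and choosing $\eps=[2C(a-1)\kappa]^{-2p/(2p-n)}$ makes the resulting coefficient of $h(t)$ equal to $\tfrac12$. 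After absorption one is left with a \emph{standard} linear Gronwall inequality whose solution is exactly the stated bound, the compound exponent $n/(2p-n)$ arising from $\eps^{-n/(2p)}$. This is more elementary than the Mittag--Leffler route and reproduces the precise form of $\underline{J}_r$ directly; your iteration would give a valid but differently shaped exponent (a pure power $[C(a-1)\kappa]^{2p/(2p-n)}$ rather than the ``$1+\cdots$'' form), which is why you flagged constant-tracking as the obstacle. One further minor point: the single kernel estimate you wrote holds as stated only for $t-s\le r^2$; the paper handles $t-s>r^2$ by a separate, cruder bound using only volume doubling and the $L^1$ mass of the heat kernel.
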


In the following steps, we will prove the claim.\\

\hspace{-.45cm}{\it step 1.   Conversion into an integral equation.}

Let $a=5\delta^{-1}$, and
\be
\lab{defw}
w = J^{-(a-1)}.
\ee
It is straightforward to check that $w$ satisfies
\be
\lab{eqforw}
\begin{cases}
\Delta w -\pa_t w +2(a-1) V w =0 , \quad \text{on} \quad {B(O,r)} \times (0, \infty);\\
w(\cdot, 0) =1,\ on\ B(O,r)\\
w(\cdot,t)=1,\ on\ \pa B(O,r).
\end{cases}
\ee
Since $V$ is a nonnegative smooth function, \eqref{eqforw} has a long time solution.

To show that $J$ exists for all time and derive the bounds for $J$, we derive the bounds for $w$ first. Via the Duhamel's formula, \eqref{eqforw} can be transformed to the following
integral equation,
\be
\label{inteqforw}
w(x, t) = 1+ 2(a-1)\int^t_0 \int_{B(O,r)} G_0(x, t-s; y, 0) V(y) w (y, s) dyds.
\ee Here $G_0(x,t;y,s)$ is the Dirichlet heat kernel on $B(O,r)$.\\

\hspace{-.45cm}{\it step 2. long time bounds}

For a lower bound of $w$, notice that since $a-1>0$, $V\geq 0$ and $w=1$ on the parabolic boundary of $B(O, r)\times [0,\infty)$, it follows from the maximum principle (see e.g. \cite{Lie} Lemma 2.1) that $w(x,t)\geq 1$ on $B(O,r)\times (0,\infty)$.

Next, we use Gronwall's inequality to derive an upper bound for $w$. Let $h(t)=\sup_{B(O,r)\times [0,t]}w(x,s)$. Note that $h(t)$ is nondecreasing since $w\geq 1$. Thus, it follows from \eqref{inteqforw} that
\be\label{eq h1}
h(t) \leq 1+ 2(a-1)\int^t_0 \int_{B(O,r)} G_0(x, t-s; y, 0) V(y) h(s) dyds.
\ee
To estimate the second term on the right, we consider the following two cases.

{\it Case 1:} If $t-s\geq r^2$, then by Lemma \ref{lem volume doubling}, we get
\be\label{vol1}
|B(z,\sqrt{t-s})|\geq|B(z,r)|\geq C(n)|B(z,2r)|\geq C(n)|B(O,r)|,
\ee
for any $z\in B(O,r)$.

Notice further that $ G_0(x, t-s; y, 0)\leq  G(x, t-s; y, 0)$, from \eqref{Gaussian upper bound}, \eqref{vol1} and H\"older inequality, we have, since $r \le 1$, that
\be\label{case1}
\al
\int_{B(O,r)}G_0(x,t-s;y,0)V(y)dy
\leq &\int_{B(O,r)}\frac{C_1}{|B(x,\sqrt{t-s})|^{\frac{1}{2}}|B(y,\sqrt{t-s})|^{\frac{1}{2}}}e^{-\frac{d^2(x,y)}{C_2(t-s)}}V(y)dy\\
\leq &\frac{C(n,p)}{|B(O,r)|}\int_{B(O,r)}V(y)dy\\
\leq &C(n,p)\left(\frac{1}{|B(O,r)|}\int_{B(O,r)}V(y)^p dy\right)^{1/p}\\
\leq& C(n,p)r^{-2}k(p,r).
\eal
\ee

{\it Case 2:} If $t-s\leq r^2$, then again by Lemma \eqref{lem volume doubling}, for any $z\in B(O, r)$ we have
\be
|B(z,\sqrt{t-s})|\geq 2\frac{(t-s)^{n/2}}{r^n}|B(z,r)|\geq C(n)\frac{(t-s)^{n/2}}{r^n}|B(O,r)|.
\ee
Thus, we have
\be\label{case2}
\al
\int_{B(O,r)}G_0(x,t-s;y,0)V(y)dy
\leq &||V||_{L^p,B(O,r)}\left(\int_{B(O,r)}G^{\frac{p}{p-1}}dy\right)^{\frac{p-1}{p}}\\
=& ||V||_{L^p,B(O,r)}\left(\int_{B(O,r)}G^{\frac{1}{p-1}}\cdot G dy\right)^{\frac{p-1}{p}}\\
\leq & ||V||_{L^p,B(O,r)} \frac{C_1^{1/p}}{|B(x,\sqrt{t-s})|^{\frac{1}{2p}}|B(y,\sqrt{t-s})|^{\frac{1}{2p}}}\\
\leq & C(n,p)||V||_{L^p,B(O,r)}\frac{r^{\frac{n}{p}}}{(t-s)^{\frac{n}{2p}}}\frac{1}{|B(O,r)|^{1/p}}\\
\leq& C(n,p)k(p,r)\frac{r^{\frac{n}{p}-2}}{(t-s)^{\frac{n}{2p}}}.
\eal
\ee
Here $||V||_{L^p,B(O,r)}=\left(\int_{B(O,r)}|V|^p \right)^{1/p}.$

Inserting \eqref{case1} and \eqref{case2} in \eqref{eq h1} yields
\be\label{eq h2}
\al
h(t)\leq & 1+2(a-1)\int_0^{t-r^2}+\int_{t-r^2}^t\left(\int_{B(O,r)}G(x,t-s;y,0)V(y)h(s)dy\right)ds\\
\leq & 1+C(n,p)(a-1)k(p,r)\left[r^{-2}\int_0^{t-r^2}h(s)ds+\int_{t-r^2}^t\frac{r^{\frac{n}{p}-2}}{(t-s)^{\frac{n}{2p}}}h(s)ds\right].
\eal
\ee
For the last term on the right above, we have
\[
\al
\int_{t-r^2}^t\frac{r^{\frac{n}{p}-2}}{(t-s)^{\frac{n}{2p}}}h(s)ds=&\int_{t-r^2}^{t-\e r^2}+\int_{t-\e r^2}^{t}\frac{r^{\frac{n}{p}-2}}{(t-s)^{\frac{n}{2p}}}h(s)ds\\
\leq & \e^{-\frac{n}{2p}}r^{-2}\int_0^t h(s)ds+\frac{2p}{2p-n}\e^{1-\frac{n}{2p}}h(t).
\eal
\]
Therefore, \eqref{eq h2} becomes
\[
\al
h(t)\leq & 1+C(n,p)(a-1)k(p,r)r^{-2}\int_0^{t}h(s)ds\\
&\ +C(n,p)(a-1)k(p,r)\left[\e^{-\frac{n}{2p}}r^{-2}\int_0^t h(s)ds+\frac{2p}{2p-n}\e^{1-\frac{n}{2p}}h(t)\right],
\eal
\]
i.e.,
\[
\left[1-C(n,p)(a-1)k(p,r)\e^{\frac{2p-n}{2p}}\right]h(t)\leq 1+C(n,p)(a-1)k(p,r)r^{-2}(1+\e^{-\frac{n}{2p}})\int_0^th(s)ds.
\]
By choosing $\e=\left[2C(n,p)(a-1)k(p,r)\right]^{-\frac{2p}{2p-n}}$ such that $$1-C(n,p)(a-1)k(p,r)\e^{\frac{2p-n}{2p}}=\frac{1}{2},$$ one gets
\[
h(t)\leq 2+2C(n,p)(a-1)k(p,r)r^{-2}\left(1+[2C(n,p)(a-1)k(p,r)]^{\frac{n}{2p-n}}\right)\int_0^th(s)ds,
\]
which is the Gr\"onwall inequality.

Thus, we obtain
\[
w(x,t)\leq h(t)\leq 2\exp\left\{2C(n,p)(a-1)k(p,r)r^{-2}\left(1+[2C(n,p)(a-1)k(p,r)]^{\frac{n}{2p-n}}\right)t\right\}.
\]
From Remark 2.2 in \cite{DWZ}, we know
\[
k(p,r)\leq 2^{1/p}k(p,1)\leq 2^{1/p}\kappa.
\]
It follows that
\[
w(x,t)\leq 2\exp\left\{2C(n,p)(a-1)\kappa r^{-2}\left(1+[2C(n,p)(a-1)\kappa]^{\frac{n}{2p-n}}\right)t\right\}.
\]
Since $w=J^{-(a-1)}$, we derive from above that
\[J\geq 2^{-\frac{1}{a-1}}\exp\left\{-2C(n,p)\kappa r^{-2}\left(1+[2C(a-1)\kappa]^{\frac{n}{2p-n}}\right)t\right\}.
\]
This finishes the proof of the claim.\\

Now let us continue the proof of the theorem. Let $J$ be the function in the claim with $r=1$, then \eqref{ineqtQ} becomes
\be\label{eq Q}
\al
(\Delta  - \pa_t) ( t Q )
+ 2 \frac{\d u}{u} \d (t Q)
&\ge  \a t \frac{2 - \delta }{n} J \left( | \d  f |^2 - \pa_t f \right)^2   - \delta \a  t J |\d f|^4 -Q.
\eal
\ee
According to Lemma \ref{cutoff}, we may choose a cut-off function $\phi$ satisfying
\be\label{eq cutoff}
0\leq \phi\leq 1,\quad supp\phi\subset\subset B(O,1),\quad \phi=1\ in\ B(O,\frac{1}{2}),\quad \ |\d \phi|^2+|\Delta \phi|\leq C(n,p).
\ee
In the following, we will use $C$ for constant $C(n,p)$ for simplicity. But the constants may be different from line to line.

From \eqref{eq Q} we have
\be\label{eq tphiQ}
\al
&t\phi^2(\Delta  - \pa_t) (  t\phi^2 Q )+ 2 t\phi^2\frac{\d u}{u} \d (t\phi^2 Q)\\
\ge&  \a t^2 \phi^4\frac{2 - \delta }{n} J  \left( | \d  f |^2 - \pa_t f \right)^2   - \delta \a  t^2 \phi^4 J |\d f|^4 -t\phi^4 Q+2t^2\phi^3Q\Delta \phi\\
& +2t^2\phi^2Q|\d \phi|^2+ 4t^2\phi^3\d Q\d\phi+4t^2\phi^3Q\frac{\d u}{u}\d \phi.
\eal
\ee
For any $T>0$, we may assume that $t\phi^2 Q$ achieves a positive maximum at some interior point $x\in B(O,r)$ and time $t\in(0,T]$, for otherwise, we have $Q\leq 0$ which is stronger than \eqref{Li-Yau}. Then, at $x$ and $t$ one has
\be\label{max point}
(\Delta -\pa_t)(t\phi^2Q)\leq 0,\ and\ \d(t\phi^2Q)=0,\ i.e.,\ \phi\d Q =-2Q\d \phi.
\ee
It follows from \eqref{eq cutoff}, \eqref{eq tphiQ} and \eqref{max point} that
\be\label{eq tphiQ2}
\al
0\geq& \a t^2 \phi^4\frac{2 - \delta }{n} J \left( | \d  f |^2 - \pa_t f \right)^2   - \delta \a  t^2 \phi^4 J |\d f|^4 -t\phi^4 Q+t^2\phi^3Q\Delta \phi\\
&\ - 6t^2\phi^2Q|\d\phi|^2+2t^2\phi^3Q\frac{\d u}{u}\d \phi\\
\geq & \a t^2 \phi^4\frac{2 - \delta }{n} J \left( | \d  f |^2 - \pa_t f \right)^2   - \delta \a  t^2 \phi^4 J |\d f|^4 -t\phi^2 Q-Ct^2\phi^2Q-Ct^2\phi^3Q|\d f|.
\eal
\ee
Notice that
\[
\al
\left( | \d  f |^2 - \pa_t f \right)^2=&\left[Q+(1-\a J)|\d f|^2\right]^2\\
=& Q^2+2(1-\a J)Q|\d f|^2+(1-\a J)^2|\d f|^4.
\eal
\]
One gets from \eqref{eq tphiQ2} that
\be\label{eq tphiQ3}
\al
0\geq &\a t^2 \phi^4\frac{2 - \delta}{n}JQ^2  + 2\a t^2 \phi^4\frac{2 - \delta}{n}J(1-\a J)Q|\d f|^2\\
&\ +\a  t^2 \phi^4 J\left[\frac{2 - \delta}{n}(1-\a J)^2- \delta\right] |\d f|^4 -t\phi^2 Q-Ct^2\phi^2Q-Ct^2\phi^3Q|\d f|.
\eal
\ee
By choosing
\be\lab{alphadelta}
\delta=\frac{2(1-\alpha)^2}{n+(1-\alpha)^2},
\ee one has
\be
\lab{conddelta}
\frac{2-\delta}{n} (1-\a )^2 - \delta  = 0.
\ee
Since $J\leq 1$, we derive from above that
$$\frac{2-\delta}{n} (1-\a J)^2 - \delta  \ge 0\quad \text{on} \quad B(O,1) \times [0, \infty).$$
Inserting this in \eqref{eq tphiQ3} induces
\[
\al
0\geq &\a t^2 \phi^4\frac{2 - \delta}{n}JQ^2  + 2\a t^2 \phi^4\frac{2 - \delta}{n}J(1-\a J)Q|\d f|^2-t\phi^2 Q-Ct^2\phi^2Q-Ct^2\phi^3Q|\d f|\\
=& \a\frac{2 - \delta}{n}\underline{J}(t\phi^2Q)^2+\left[2\a \frac{2 - \delta}{n}(1-\a)\underline{J}\phi^2|\d f|^2-C\phi|\d f|\right](t\phi^2 Q)t-t\phi^2 Q-Ct(t\phi^2Q)\\
\geq& \a\frac{2 - \delta}{n}\underline{J}(t\phi^2Q)^2-\frac{Ct}{\left[\a(2-\delta)(1-\a)\underline{J}\right]}(t\phi^2Q)-Ct(t\phi^2Q)-t\phi^2Q.
\eal
\]
It follows that
\[
t\phi^2Q\leq \frac{n}{\a(2 - \delta)\underline{J}}+\frac{Ct}{\a(2 - \delta)\underline{J}}\left[\frac{1}{\a(2-\delta)\underline{J}(1-\a)}+1\right],
\]
which implies that
\[
Q\leq \frac{n}{\a(2 - \delta)\underline{J}}\frac{1}{t}+\frac{C}{\a(2 - \delta)\underline{J}}\left[\frac{1}{\a(2-\delta)\underline{J}(1-\a)}+1\right]
\]
in $B(O,\frac{1}{2})\times(0,\infty)$.
\end{proof}

\begin{remark}\label{rmk}
The Li-Yau bound \eqref{Li-Yau} is a scaling invariant inequality. Indeed, under the parabolic scaling by a factor of $r^{2}$, i.e., let $\tilde{g}=r^{2}g$ and $\tilde{t}=r^{2}t$, the assumption $k(p,1)\leq \kappa$ reads $\tilde{k}(p,r)\leq \kappa$, and \eqref{Li-Yau} becomes
\[
\a \underline{\tilde{J}}_r\frac{|\d u|^2}{u^2}-\frac{\pa_{\tilde{t}} u}{u}\leq \frac{n}{\a(2 - \delta)\underline{\tilde{J}}_r}\frac{1}{\tilde{t}}+\frac{C}{\left(\a(2 - \delta)\underline{\tilde{J}}_r\right)r^2}\left[\frac{1}{\a(2-\delta)\underline{\tilde{J}}_r(1-\a \underline{\tilde{J}}_r)}+1\right],
\]
where \[
\underline{\tilde{J}}_r=\underline{\tilde{J}}_r(t)=2^{-\frac{1}{a-1}}\exp\left\{-2C\kappa r^{-2}\left(1+[2C(a-1)\kappa]^{\frac{n}{2p-n}}\right)\tilde{t}\right\}.
\]
\end{remark}

\hspace{-.5cm}{\bf Acknowledgements}
We are grateful to Professors H.-D. Cao, X.Z. Dai, H.Z. Li, G.F. Wei for their interest and comments on the result.

Q.S.Z. gratefully acknowledges the support of Simons'
Foundation


\begin{thebibliography}{99}

\bibitem[BBG]{BBG} Bakry, Dominique; Bolley, Francois; Gentil Ivan, {\it The Li-Yau inequality and applications under a curvature-dimension condition},  arXiv:1412.5165, 2014.

\bibitem[BL]{BL} Bakry, Dominique; Ledoux, Michel,
{\it A logarithmic Sobolev form of the Li-Yau parabolic inequality},
  Rev. Mat. Iberoamericana, Volume 22, Number 2 (2006), 683-702.

\bibitem[CaNi]{CaNi} Cao, Huai-Dong; Ni, Lei, {\it Matrix Li-Yau-Hamilton estimates for the heat equation on K\"ahler manifolds}, Math. Ann. 331 (2005), no. 4, 795-807.

\bibitem[CTZ]{CTZ} Cao, Huai-Dong; Tian, Gang; Zhu, Xiaohua, {\it K\"ahler-Ricci solitons on compact complex manifolds with $C_1(M)>0$}. Geom. Funct. Anal. 15 (2005), no. 3, 697-719.

\bibitem[DWZ]{DWZ} Dai, Xianzhe; Wei, Guofang; Zhang, Zhenlei, {\it Local Sobolev Constant Estimate for Integral Ricci Curvature Bounds}, arXiv:1601.08191.

\bibitem[Dav]{Dav} Davies, E. B., {\it Heat  kernels  and  spectral  theory}, volume  92  of
Cambridge  Tracts  in Mathematics,   Cambridge University Press, Cambridge, 1989

\bibitem[GM]{GM} Garofalo, N.; Mondino, A.,  {\it  Li-Yau and Harnack type inequalities in
$RCD^*(K;N)$ metric measure spaces.} Nonlinear Anal., 95, 721-734,  2014.

\bibitem[Ha]{Ha} Hamilton, Richard S., {\it A matrix Harnack estimate for the heat equation}, Comm. Anal. Geom. 1 (1993), no. 1, 113-126.

\bibitem[LX]{LX} Li, J. F.; Xu, X.J.,  {\it Differential Harnack inequalities on Riemannian manifolds I: linear heat equation.} Adv. Math., 226(5), 4456-4491, 2011.

\bibitem[LY]{LY} Li, Peter; Yau, Shing-Tung, {\it On the parabolic kernel of the Schr\"odinger operator.}  Acta Math. 156 (1986), no. 3-4, 153-201.

\bibitem[Lie]{Lie}  Lieberman, Gary M., {\it Second order parabolic differential equations.} World Scientific Publishing Co., Inc., River Edge, NJ, 1996.

\bibitem[PeWe]{PeWe} Petersen, Peter; Wei, Guofang, {\it Relative volume comparison with integral curvature bounds}, Geom. Funct. Anal. 7 (1997), no. 6, 1031-1045.

\bibitem[QZZ]{QZZ} Qian, Z.; Zhang, H.-C.; Zhu X.P.,  {\it Sharp spectral gap and Li-Yau's estimate on
Alexandrov spaces}. Math. Z., 273(3-4), 1175-1195, 2013.

\bibitem[Sa]{Sa} Saloff-Coste, Laurent, {\it Uniformly elliptic operators on Riemannian manifolds}, J. Differential Geom. 36 (1992), no. 2, 417-450.

\bibitem[Wan]{Wan}  Wang, F.-Y., {\it Gradient and Harnack inequalities on noncompact manifolds with boundary}, Pacific J. Math., 245(1), 185-200, 2010.

\bibitem[WanJ]{WanJ} Wang, Jiaping, {\it Global heat kernel estimates.} Pacific J. Math. 178 (1997), no. 2, 377-398.

\bibitem[ZhZx]{ZhZx} Zhang, Hui-Chun; Zhu, Xi-Ping, {\it Local Li-Yau's estimates on $RCD^*(K,N)$ metric measure spaces}, arXiv:1602.05347, to appear in Cal. Var. PDE.

\bibitem[ZZ]{ZZ} Zhang, Qi S.; Zhu, Meng, {\it Li-Yau gradient bounds under nearly optimal curvature conditions},  arXiv:1511.00791.
\end{thebibliography}
\end{document}